\newtheorem{prethm}{{\bf Theorem}}
\newenvironment{thm}{\begin{prethm}{\hspace{-0.5
               em}{\bf .}}}{\end{prethm}}
\newtheorem{prelemma}{{\bf Lemma}}
\newenvironment{lemma}{\begin{prelemma}{\hspace{-0.5
               em}{\bf .}}}{\end{prelemma}}
\newtheorem{preex}{{\bf Example}}
\newtheorem{preprop}{{\bf Proposition}}
\newenvironment{prop}{\begin{preprop}{\hspace{-0.5em}{\bf .}}}{\end{preprop}}
\newtheorem{precor}{{\bf Corollary}}
\newtheorem{preremark}{{\bf Remark}}
\newenvironment{remark}{\begin{preremark}{\hspace{-0.5
               em}{\bf.}}}{\end{preremark}}
\newtheorem{preprob}{{\bf Problem}}
\newtheorem{predefin}{{\bf Definition}}
\newenvironment{defin}{\begin{predefin}{\hspace{-0.5
               em}{\bf .}}}{\end{predefin}}
\newtheorem{preconj}{{\bf Conjecture}}
\newtheorem{preprobb}{{\bf Problem}}
\newtheorem{prelem}{{\bf Theorem}}
\newenvironment{proof}{{\bf Proof.}\rm }{\hfill{$\Box$}}
\newtheorem{presolution}{{\bf Solution.}}
\def\newpic#1{}
\def\qed{\ifhmode\unskip\nobreak\fi\quad\ifmmode\Box\else$\Box$\fi}
\title{\Large\bf\noindent On the largest dynamic monopolies of graphs with a given average threshold}
\author{\large\bf Kaveh Khoshkhah~~~~~Manouchehr Zaker\footnote{E-mail: mzaker@iasbs.ac.ir}
\vspace{5mm}\\
    Department of Mathematics,\\
     Institute for Advanced Studies in Basic Sciences,\\
    Zanjan 45137-66731, Iran}
    \date{}
\begin{document}
\maketitle
\begin{abstract}
\noindent Let $G$ be a graph and $\tau$ be an assignment of nonnegative integer thresholds to the vertices of $G$. A subset of vertices $D$ is said to be a $\tau$-dynamic monopoly, if $V(G)$ can be partitioned into subsets $D_0, D_1, \ldots, D_k$ such that $D_0=D$ and for any $i\in \{0, \ldots, k-1\}$, each vertex $v$ in $D_{i+1}$ has at least $\tau(v)$ neighbors in $D_0\cup \ldots \cup D_i$. Denote the size of smallest $\tau$-dynamic monopoly by $dyn_{\tau}(G)$ and the average of thresholds in $\tau$ by $\overline{\tau}$. We show that the values of $dyn_{\tau}(G)$ over all assignments $\tau$ with the same average threshold is a continuous set of integers. For any positive number $t$, denote the maximum $dyn_{\tau}(G)$ taken over all threshold assignments $\tau$ with $\overline{\tau}\leq t$, by $Ldyn_t(G)$. In fact, $Ldyn_t(G)$ shows the worst-case value of a dynamic monopoly when the average threshold is a given number $t$. We investigate under what conditions on $t$, there exists an upper bound for $Ldyn_{t}(G)$ of the form $c|G|$, where $c<1$. Next, we show that $Ldyn_t(G)$ is coNP-hard for planar graphs but has polynomial-time solution for forests.
\end{abstract}

\noindent {\bf Mathematics Subject Classification:} 05C69, 05C85, 91D30

\noindent {\bf Keywords:} Spread of influence in graphs; Irreversible dynamic monopolies


\section{Introduction}

\noindent In this paper we deal with simple undirected graphs. For any such graph $G=(V,E)$, we denote the cardinality of its vertex set by $|G|$ and the edge density of graph $G$ by $\epsilon(G):=|E|/|G|$. We denote the degree of a vertex $v$ in $G$ by $deg_G(v)$. For other graph theoretical notations we refer the reader to \cite{BM}. By a threshold assignment for the vertices of $G$ we mean any function $\tau: V(G) \rightarrow \Bbb{N}\cup \{0\}$. A subset of vertices $D$ is said to be a {\it $\tau$-dynamic monopoly} of $G$ or simply {\it $\tau$-dynamo} of $G$, if for some nonnegative integer $k$, the vertices of $G$ can be partitioned into subsets $D_0, D_1, \ldots, D_k$ such that $D_0=D$ and for any $i$, $1\leq i \leq k$, the set $D_{i}$ consists of all vertices $v$ which has at least $\tau(v)$ neighbors in $D_0\cup \ldots \cup D_{i-1}$. Denote the smallest size of any $\tau$-dynamo of $G$ by $dyn_{\tau}(G)$. Dynamic monopolies are in fact modeling the spread of influence in social networks. The spread of innovation or a new product in a community, spread of opinion in Yes-No elections, spread of virus in the internet, spread of disease in a population are some examples of these phenomena. Obviously, if for a vertex $v$ we have $\tau(v)=deg_G(v)+1$ then $v$ should belong to any dynamic monopoly of $(G,\tau)$. We call such a vertex $v$ {\it self-opinioned} (from another interpretation it can be called {\it vaccinated vertex}). Irreversible dynamic monopolies and the equivalent concepts target set selection and conversion sets have been the subject of active research in recent years by many authors \cite{CL, CDPRS, C, DR, FKRRS, KSZ, SZ, Z1, Z2}.

\noindent In this paper by $(G,\tau)$ we mean a graph $G$ and a threshold assignment for the vertices of $G$. The average threshold of $\tau$, denoted by $\overline{\tau}$, is ${\sum}_{v\in V(G)} \tau(v)/|G|$. In Proposition \ref{middle} we show that the values of $dyn_{\tau}(G)$ over all threshold assignments with the same average threshold form a continuous set of integers. The maximum element of this set has been studied first time in \cite{KSZ}, where the following notation was introduced. Let $t$ be a non-negative rational number such that $t|G|$ is an integer, then $Dyn_t(G)$ is defined as $Dyn_t(G)=\max_{\tau: \overline{\tau}=t}~ dyn_{\tau}(G)$. The smallest size of dynamic monopolies with a given average threshold was introduced and studied in \cite{Z2}. Dynamic monopolies with given average threshold was also recently studied in \cite{CR}. In the definition of $Dyn_t(G)$, it is assumed that $t|G|$ is integer. In order to consider all values of $t$, we modify a little bit the definition. But we are forced to make a new notation, i.e. $Ldyn_t(G)$ (which stands for the largest dynamo). The formal definition is as follows.

\begin{defin}
Let $G$ be a graph and $t$ a positive number. We define $Ldyn_t(G) = \max\{dyn_{\tau}(G)| \overline{\tau}\leq t\}$. Assume that a subset $D\subseteq V(G)$ and an assignment of thresholds $\tau_0$ are such that $\bar{\tau_0}\leq t$, $|D|=dyn_{\tau_0}(G)=Ldyn_t(G)$ and $D$ is a $\tau_0$-dynamic monopoly of $(G,\tau_0)$. Then we say $(D, \tau_0)$ is a $t$-Ldynamo of $G$.
\end{defin}

\noindent $Ldyn_t(G)$ does in fact show the worst-case value of a dynamic monopoly when the average threshold is a prescribed given number. The following concept is motivated by the concept of dynamo-unbounded family of graphs, defined in \cite{Z1} concerning the smallest size of dynamic monopolies in graphs.

\begin{defin}
\noindent Let for any $n\in \Bbb{N}$, $G_n$ be a graph and $t_n$ be a number such that $0\leq t_n\leq 2\epsilon(G_n)$. We say $\{(G_n,t_n)\}_{n\in \Bbb{N}}$ is $Ldynamo$-bounded if there exists a constant $\lambda<1$ such that for any $n$, $Ldyn_{t_n}(G_n)\leq \lambda|G_n|$.
\end{defin}

\noindent Outline of the paper is as follows. In Section 2, we show that the values of $dyn_{\tau}(G)$ over all assignments $\tau$ with the same average threshold is a continuous set of integers (Proposition \ref{middle}). Then we obtain a necessary and sufficient condition for a family of graphs to be Ldynamo-bounded (Propositions \ref{sequence} and \ref{upperLdyn}).
In Section 3, it is shown that the decision problem $Ldynamo(k)$ (to be defined later) is coNP-hard for planar graphs (Theorem \ref{DynNP}) but has polynomial-time solution for forests (Theorem \ref{algorithm-forst}).

\section{Some results on $Ldyn_t(G)$}

\noindent We first show that the values of $dyn_{\tau}(G)$ over all threshold assignments $\tau$ with the same average threshold are continuous. We need the following lemma from \cite{SZ}.

\begin{lemma}\cite{SZ}\label{Lip}
Let $G$ be a graph and $\tau$ and $\tau'$ be two threshold assignments to the vertices of $G$ such that $\tau(u)=\tau'(u)$ for all vertices $u$ of $G$ except for exactly one vertex, say $v$. Then \begin{equation*}
\begin{cases}
dyn_{\tau}(G) -1 \leq dyn_{\tau'}(G) \leq dyn_{\tau}(G), & if~\tau(v)>\tau'(v),\\
dyn_{\tau}(G)\leq dyn_{\tau'}(G)\leq dyn_{\tau}(G)+1, & if~\tau(v)<\tau'(v).
\end{cases}
\end{equation*}
\end{lemma}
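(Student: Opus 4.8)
The plan is to derive the lemma from two elementary monotonicity properties of the deterministic activation process defined in the introduction, and then to observe that the two displayed cases are equivalent under interchanging the roles of $\tau$ and $\tau'$. Throughout, for a seed set $S$ and a threshold assignment $\sigma$ I would write $A_i(S,\sigma)$ for the set of vertices activated by round $i$, so that $A_0 = S$ and $A_i$ adjoins to $A_{i-1}$ every vertex $u$ having at least $\sigma(u)$ neighbors in $A_{i-1}$; thus $S$ is a $\sigma$-dynamo precisely when $A_i(S,\sigma)=V(G)$ for some $i$. This is just a reformulation of the partition $D_0,\ldots,D_k$ used in the definition, with $A_i = D_0\cup\cdots\cup D_i$.

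First I would record the basic monotonicity of this process: if $S\subseteq S'$ and $\sigma'\leq\sigma$ coordinatewise, then $A_i(S,\sigma)\subseteq A_i(S',\sigma')$ for every $i$. This follows by a routine induction on the round number, since enlarging the seed or lowering a threshold can only make the condition ``at least $\sigma(u)$ active neighbors'' easier to meet. In particular, every $\sigma$-dynamo is also a $\sigma'$-dynamo whenever $\sigma'\leq\sigma$, because the accumulated sets under the easier assignment dominate those under the harder one and hence also exhaust $V(G)$.

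Now assume $\tau(v)>\tau'(v)$, which is the first case; since $\tau$ and $\tau'$ agree off $v$, this means $\tau\geq\tau'$ coordinatewise. By the monotonicity just recorded, every minimum $\tau$-dynamo is also a $\tau'$-dynamo, which gives the upper bound $dyn_{\tau'}(G)\leq dyn_{\tau}(G)$. For the lower bound I would take a minimum $\tau'$-dynamo $D'$ and argue that $D'\cup\{v\}$ is a $\tau$-dynamo: the two assignments differ only at $v$, and by placing $v$ in the seed the threshold of $v$ is never invoked, while for every other vertex the activation rule is literally identical under $\tau$ and $\tau'$. Hence, using seed-monotonicity, the $\tau$-process started from $D'\cup\{v\}$ dominates the $\tau'$-process started from $D'$ and therefore also exhausts $V(G)$. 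This yields $dyn_{\tau}(G)\leq |D'\cup\{v\}|\leq dyn_{\tau'}(G)+1$, equivalently $dyn_{\tau}(G)-1\leq dyn_{\tau'}(G)$, completing the first case.

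Finally, the second case $\tau(v)<\tau'(v)$ follows immediately by applying the first case with the roles of $\tau$ and $\tau'$ interchanged and rearranging the two resulting inequalities. The only step that needs genuine care is the claim that $D'\cup\{v\}$ activates all of $V(G)$ under $\tau$; this is precisely where the seed-monotonicity of the threshold process must be invoked cleanly, so that activating $v$ ``for free'' never slows down the synchronized activation of the remaining vertices. Everything else is bookkeeping with the two inequalities, so I expect this verification to be the main (though mild) obstacle.
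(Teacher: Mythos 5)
The paper does not actually prove this lemma; it is imported from reference \cite{SZ} without proof, so there is no in-paper argument to compare against. Your proof is correct and complete: the coordinatewise monotonicity of the activation process in both the seed set and the thresholds gives $dyn_{\tau'}(G)\leq dyn_{\tau}(G)$ when $\tau\geq\tau'$, the $D'\cup\{v\}$ argument (with $v$ seeded so its threshold is never consulted) gives the bound $dyn_{\tau}(G)\leq dyn_{\tau'}(G)+1$, and the second case is indeed just the first with the roles of $\tau$ and $\tau'$ exchanged.
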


\noindent The continuity result is as follows.

\begin{prop}\label{middle}
Let $\tau$ and $\tau'$ be two threshold assignments for the vertices of $G$ such that $\bar{\tau}=\bar{\tau}'$. Let also $r$ be an integer such that $dyn_{\tau}(G)\leq r \leq dyn_{\tau'}(G)$. Then there exists $\tau''$ with $\bar{\tau}=\bar{\tau}''$ such that $dyn_{\tau''}(G)=r$.
\end{prop}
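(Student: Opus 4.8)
The plan is to join $\tau$ and $\tau'$ by a finite sequence of threshold assignments, all sharing the common vertex-sum $S:=\sum_{v}\tau(v)=\sum_{v}\tau'(v)$ (the equality holds precisely because $\bar\tau=\bar\tau'$), arranged so that consecutive assignments differ by a single sum-preserving ``transfer'' and therefore have $dyn$-values differing by at most one. A discrete intermediate value argument then produces an assignment realizing the target value $r$.

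First I would set up the elementary transfer move. Given a legitimate assignment $\rho$ of sum $S$ with $\rho\neq\tau'$, the equality $\sum_v\rho(v)=\sum_v\tau'(v)$ forces the existence of vertices $u,w$ with $\rho(u)<\tau'(u)$ and $\rho(w)>\tau'(w)$. Define $\rho^+$ by raising the value at $u$ by one ($\rho^+(u)=\rho(u)+1$, unchanged elsewhere), and then $\rho'$ by lowering the value at $w$ by one ($\rho'(w)=\rho^+(w)-1$, unchanged elsewhere). All three of $\rho,\rho^+,\rho'$ are nonnegative integer assignments (note $\rho(w)>\tau'(w)\ge 0$ gives $\rho(w)\ge 1$, so the decrement stays nonnegative), the endpoints $\rho$ and $\rho'$ again have sum $S$, and $\sum_v|\rho'(v)-\tau'(v)|=\sum_v|\rho(v)-\tau'(v)|-2$.

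Next I would bound the change in $dyn$ across one transfer using Lemma~\ref{Lip}. The increase $\rho\to\rho^+$ is a one-vertex change with $\rho(u)<\rho^+(u)$, so $dyn_\rho(G)\le dyn_{\rho^+}(G)\le dyn_\rho(G)+1$; the decrease $\rho^+\to\rho'$ is a one-vertex change with $\rho^+(w)>\rho'(w)$, so $dyn_{\rho^+}(G)-1\le dyn_{\rho'}(G)\le dyn_{\rho^+}(G)$. Combining yields $|dyn_{\rho'}(G)-dyn_\rho(G)|\le 1$. Now iterate, starting from $\rho_0=\tau$: each transfer strictly decreases the nonnegative integer $\sum_v|\rho_j(v)-\tau'(v)|$ by $2$, so after finitely many steps we reach $\rho_m=\tau'$, and every $\rho_j$ has sum $S$, hence average $\bar\tau$. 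The integer sequence $a_j:=dyn_{\rho_j}(G)$ has unit steps, with $a_0=dyn_\tau(G)\le r$ and $a_m=dyn_{\tau'}(G)\ge r$; a sequence of integers moving by at most one per step that starts at or below $r$ and ends at or above $r$ must attain $r$. Choosing the first index $j$ with $a_j=r$ and setting $\tau''=\rho_j$ gives $\bar{\tau}''=\bar\tau$ and $dyn_{\tau''}(G)=r$.

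The only genuinely delicate point, and the one I expect to require care in the write-up, is maintaining a path that stays entirely within legitimate nonnegative-integer assignments of constant sum while Lemma~\ref{Lip} is invoked: since that lemma governs only single-vertex modifications, the whole argument hinges on decomposing each sum-preserving move into exactly one increment followed by one decrement and checking nonnegativity at the intermediate assignment $\rho^+$. Beyond this bookkeeping, the remainder is just the discrete intermediate value principle.
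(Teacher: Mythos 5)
Your proof is correct and is essentially the paper's own argument: the paper also joins $\tau$ to $\tau'$ by unit sum-preserving transfers (decrement where $\tau>\tau'$, increment where $\tau<\tau'$), bounds each step by Lemma~\ref{Lip}, and packages the walk as an induction on $\delta(\tau,\tau')=\sum_{v:\tau(v)>\tau'(v)}(\tau(v)-\tau'(v))$, which is exactly half of your potential $\sum_v|\rho_j(v)-\tau'(v)|$. Your explicit decomposition of each transfer into an increment followed by a decrement (with the nonnegativity check at $\rho^+$) and the spelled-out discrete intermediate value step are just a more detailed rendering of what the paper leaves implicit.
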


\noindent \begin{proof}
For any two threshold assignments $\tau$ and $\tau'$ with the same average threshold, define $\delta(\tau,\tau')={\sum}_{v:\tau(v)>\tau'(v)}(\tau(v)-\tau'(v))$. We prove the proposition by the induction on $\delta(\tau,\tau')$. If $\delta(\tau,\tau')=0$ then for any vertex $v$, $\tau(v)\leq \tau'(v)$. But the average thresholds are the same, hence $\tau=\tau'$ and the assertion is trivial. Let $k\geq 1$ and assume that the proposition holds for any two $\tau$ and $\tau'$ with the same average threshold such that $\delta(\tau,\tau')\leq k$. We prove it for $k+1$. Assume that $\tau$ and $\tau'$ are given such that $\delta(\tau,\tau')=k+1$ and $\tau\neq \tau'$. Define $W=\{v: \tau(v)>\tau'(v)\}$. Let $w\in W$. There exists a vertex $u$ such that $\tau(u)< \tau'(u)$. Since otherwise by $\bar{\tau}=\bar{\tau}'$ we would have $\tau=\tau'$. Define a new threshold $\tau''$ as follows. For any vertex $v$ with $v\not\in \{u,w\}$ set $\tau''(v)=\tau(v)$. Set also $\tau''(w)=\tau(w)-1$ and $\tau''(u)=\tau(u)+1$. We have $\delta(\tau'',\tau')\leq k$, also the average threshold of $\tau''$ is the same as that of $\tau$. So the assertion holds for $\tau''$ and $\tau'$. By Lemma \ref{Lip} we have $|dyn_{\tau}(G)-dyn_{\tau''}(G)|\leq 1$. We conclude that the assertion holds for $\tau$ and $\tau'$ too.
\end{proof}

\noindent Let $G$ be a graph and $t$ be a positive number such that $t|G|$ is integer. Let $\tau$ be any assignment with average $t$ such that $\tau(v)\leq deg_G(v)$ for any vertex $v$. Let $d_1 \leq d_2 \leq \ldots \leq d_n$ be a degree sequence of $G$ in increasing form. It was proved in \cite{KSZ} that the size of any $\tau$-dynamic monopoly of $G$ is at most $\max \{k: {\sum}_{i=1}^k (d_i+1) \leq nt\}$. The proof of this result in \cite{KSZ} shows that if we allow $\tau(v)=deg_G(v)+1$ for some vertices $v$ of $G$, then the same assertion still holds. We have the following proposition concerning this fact.

\begin{prop}
Let $t$ be a positive number. Assume that in the definition of $Ldyn_t(G)$, the threshold assignments are allowed to have self-opinioned vertices. Then $Ldyn_t(G)$ can be easily obtained by a polynomial-time algorithm.\label{self-opinion}
\end{prop}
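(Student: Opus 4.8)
The plan is to show that, in this relaxed setting, $Ldyn_t(G)$ equals the explicit integer
\[
M := \max\Bigl\{k : \textstyle\sum_{i=1}^k (d_i+1) \le nt\Bigr\},
\]
where $n=|G|$ and $d_1 \le d_2 \le \cdots \le d_n$ is the degree sequence of $G$. Since $M$ is obtained merely by sorting the degrees and scanning the prefix sums $\sum_{i=1}^k (d_i+1)$, it is computable in polynomial time, which yields the assertion once the equality $Ldyn_t(G)=M$ is established.

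First I would record the upper bound. As noted in the paragraph preceding the proposition, the result of \cite{KSZ}, in the form that permits $\tau(v)=deg_G(v)+1$, guarantees that every $\tau$-dynamic monopoly has size at most $M$ whenever $\overline{\tau}\le t$. Taking the maximum over all admissible $\tau$ gives $Ldyn_t(G)\le M$.

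For the reverse inequality I would exhibit a single assignment attaining $M$. Label the vertices $v_1,\ldots,v_n$ so that $deg_G(v_i)=d_i$, set $\tau(v_i)=d_i+1$ for $1\le i\le M$, and set $\tau(v)=0$ for the remaining vertices. By the defining property of $M$ we have $\sum_v \tau(v)=\sum_{i=1}^M (d_i+1)\le nt$, so $\overline{\tau}\le t$ and this $\tau$ is admissible. The $M$ vertices $v_1,\ldots,v_M$ are self-opinioned, hence belong to every $\tau$-dynamo, giving $dyn_{\tau}(G)\ge M$; conversely $\{v_1,\ldots,v_M\}$ is itself a $\tau$-dynamo, because every other vertex has threshold $0$ and therefore joins $D_1$ in a single propagation step. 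Thus $dyn_{\tau}(G)=M$, and combining with the upper bound gives $Ldyn_t(G)=M$.

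The proof presents no real obstacle; the only point requiring care is verifying that the constructed assignment has dynamo number exactly $M$ and not more, which rests on the observation that threshold-$0$ vertices are activated immediately and so contribute nothing to the minimum dynamo beyond the forced self-opinioned vertices. The essential content of the proposition is the recognition that permitting self-opinioned vertices makes the upper bound of \cite{KSZ} tight through this transparent extremal construction.
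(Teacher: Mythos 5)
Your proposal is correct and follows essentially the same route as the paper: both use the bound $Ldyn_t(G)\le k_0=\max\{k:\sum_{i=1}^k(d_i+1)\le nt\}$ from \cite{KSZ} and then realize it by assigning $\tau(v_i)=deg_G(v_i)+1$ to the $k_0$ vertices of smallest degree and $0$ elsewhere. The only difference is that you spell out explicitly why this assignment has $dyn_\tau(G)$ equal to exactly $k_0$ (forced self-opinioned vertices for the lower bound, threshold-$0$ vertices activating immediately for the upper bound), a step the paper dismisses as clear.
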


\noindent \begin{proof}
Let $d_1 \leq d_2 \leq \ldots \leq d_n$ be a degree sequence of $G$ in increasing form. By the argument we made before Proposition \ref{self-opinion}, we have $Ldyn_{t}(G)\leq \max \{k: {\sum}_{i=1}^k (d_i+1) \leq nt\}$. Let $k_0={\max}\{k: {\sum}_{i=1}^k (d_i+1) \leq nt\}$. We obtain a threshold assignment $\tau$ as follows.
\begin{equation*}
\tau(v_i)=
\begin{cases}
deg_G(v_i)+1 & i\leq k_0,\\
0 & \text{otherwise}.
\end{cases}
\end{equation*}

\noindent Let $D=\{v_1,v_2,\ldots ,v_{k_0}\}$. It's clear that $(D,\tau)$ is a $t$-Ldynamo of $G$.

\end{proof}

\noindent In \cite{KSZ}, it was proved that there exists an infinite sequence of graphs $G_1, G_2, \ldots$ such that $|G_n|\rightarrow \infty$ and ${\lim}_{n\rightarrow \infty} Ldyn_{\epsilon(G_n)}(G_n)/|G_n| =1.$ In the following, we show that a stronger result holds. In fact we show that not only the same result holds for $Ldyn_{k\epsilon(G_n)}(G_n)$, where $k$ is any constant with $0<k\leq 2$, but also it holds for any sequence $k_n$ for which $k_n|G_n|\rightarrow \infty$. In opposite direction, Proposition \ref{upperLdyn} shows that if $k_n = {\mathcal{O}}(1/|G_n|)$ then ${\lim}_{n\rightarrow \infty} Ldyn_{k_n\epsilon(G_n)}(G_n)/|G_n| \neq 1$.

\begin{prop}\label{sequence}
There exists an infinite sequence of graphs $\{(G_n, \tau_n)\}_{n=1}^{\infty}$ satisfying $|G_n|\rightarrow \infty$ and ${\epsilon(G_n)}/{|G_n|}=o(\overline{\tau}_n)$ such that
$$\lim_{n\rightarrow \infty} \frac{Ldyn_{\overline{\tau}}(G_n)}{|G_n|} =1.$$
\end{prop}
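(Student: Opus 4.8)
The plan is to exhibit one explicit family $(G_n,\tau_n)$ in which almost every vertex is forced into the seed while the average threshold stays essentially at its smallest possible value. The guiding idea is a \emph{decoupling}: the edges, which determine $\epsilon(G_n)$, should be carried by a small set of vertices that we make free (threshold $0$, so they activate at the first step at no cost), whereas the many vertices we genuinely force into the dynamo should be as cheap as possible. This is exactly the feature that lets $\overline{\tau}_n$ be small even when $\epsilon(G_n)$ is large.

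Concretely, I would take $G_n$ to be the disjoint union of a clique $K_c$ (the \emph{core}) and $m=n-c$ isolated vertices (the \emph{periphery}), where $n=|G_n|=c+m$ and $c=c(n)$ satisfies $c\to\infty$ and $c=o(n)$; for definiteness $c=\lfloor n^{2/3}\rfloor$. Set $\tau_n(v)=1$ on every periphery vertex and $\tau_n(v)=0$ on every core vertex. A periphery vertex has degree $0$ but threshold $1$, so it can never be activated and must belong to every $\tau_n$-dynamo; conversely, seeding exactly the periphery suffices, since the core vertices have threshold $0$ and are activated at the first step. Hence $dyn_{\tau_n}(G_n)=m$, and because $\overline{\tau}_n=(m\cdot 1+c\cdot 0)/n=m/n$ we obtain $Ldyn_{\overline{\tau}_n}(G_n)\ge dyn_{\tau_n}(G_n)=m$. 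Together with the trivial bound $Ldyn_{\overline{\tau}_n}(G_n)\le n$ this squeezes
\[
1-\frac{c}{n}\;\le\;\frac{Ldyn_{\overline{\tau}_n}(G_n)}{|G_n|}\;\le\;1,
\]
and since $c=o(n)$ the ratio tends to $1$.

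It then remains to verify the density condition and to locate the construction relative to the sharp threshold. Here $\epsilon(G_n)=\binom{c}{2}/n$, so $\epsilon(G_n)/|G_n|=\binom{c}{2}/n^2\to 0$ (as $c=o(n)$), while $\overline{\tau}_n=m/n\to 1$; thus $\epsilon(G_n)/|G_n|=o(\overline{\tau}_n)$, as required. I would emphasize that this family sits exactly at the boundary: the cited bound $Ldyn_t(G)\le\max\{k:\sum_{i=1}^k(d_i+1)\le nt\}\le nt$ forces $\overline{\tau}_n\ge Ldyn_{\overline{\tau}_n}(G_n)/|G_n|$, whose right-hand side tends to $1$, so no construction can drive $\overline{\tau}_n$ below $1$, and ours attains this floor. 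Writing $\overline{\tau}_n=k_n\,\epsilon(G_n)$ gives $k_n=m/\binom{c}{2}\sim 2n/c^2\to 0$ while $k_n|G_n|\sim 2n^2/c^2\to\infty$, so the construction realizes arbitrarily small ratios $k_n$; this is the mirror image of Proposition \ref{upperLdyn}, which excludes $k_n=\mathcal{O}(1/|G_n|)$.

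The one real choice, and the point needing care, is the core size $c$: it must grow fast enough that $\epsilon(G_n)\to\infty$ and $k_n\to 0$ (forcing $c\gg\sqrt{n}$), yet stay $o(n)$ so that the periphery dominates and $Ldyn_{\overline{\tau}_n}(G_n)/|G_n|\to 1$; any $c$ with $\sqrt{n}\ll c\ll n$ works. Finally, the periphery thresholds exceed the degrees (self-opinioned vertices), which is permitted by the definition of $Ldyn_t$ and, by Proposition \ref{self-opinion}, is precisely what makes the extremal value transparent. If one prefers assignments with $\tau\le\deg$, a non-self-opinioned variant is available — replace the isolated vertices by disjoint cliques, each with uniform threshold one less than its size (whose minimum dynamo misses exactly one vertex per clique), and, to keep $k_n$ small, attach a single small dense threshold-$0$ block to supply the extra edges — at the cost of a slightly longer but routine computation.
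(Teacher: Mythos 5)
Your core--periphery construction is internally consistent and the arithmetic checks out, but it rests entirely on self-opinioned vertices: every periphery vertex has degree $0$ and threshold $1$, so $\tau_n(v)>\deg(v)$ there. You assert that this ``is permitted by the definition of $Ldyn_t$,'' and under a literal reading of Definition 1 that is true; however, the paper's framework clearly intends $Ldyn_t$ to range over assignments \emph{without} self-opinioned vertices. That is the only way the hypothesis of Proposition \ref{self-opinion} (``assume the threshold assignments are allowed to have self-opinioned vertices'') is not vacuous, and the only way Theorem \ref{DynNP} (coNP-hardness) does not collide with the polynomial-time algorithm of Proposition \ref{self-opinion}. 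Read that way, your main construction establishes only the trivial version of the statement that Proposition \ref{self-opinion} already covers --- forcing vertices into the seed by vaccinating them cheaply --- and misses the actual content of Proposition \ref{sequence}, namely that the ratio tends to $1$ even when every threshold is at most the corresponding degree.

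Your closing remark does identify the correct repair, and it is essentially the paper's own proof: the paper takes $G_n$ to be a clique $K_n$ with threshold $0$ (the dense block carrying the edges) together with $n$ disjoint copies of $K_{n+1}$, each joined to $K_n$ by a single edge, with threshold equal to the degree on the copies. Since a vertex whose threshold equals its degree activates only after \emph{all} of its neighbours are active, each copy of $K_{n+1}$ forces at least $n$ of its $n+1$ vertices into any dynamic monopoly, giving $dyn_{\tau_n}(G_n)\geq n^2$ out of $n(n+2)$ vertices, and the density condition is then a short computation. So the route you sketch in your final sentence does go through, but as written it is a one-line sketch of the only non-trivial case, while the argument you carry out in detail does not prove the proposition in the sense the paper intends. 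To fix the write-up, promote the clique-based variant to the main argument and verify its parameters explicitly.
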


\noindent \begin{proof}
We construct $G_n$ as follows. The vertex set of $G_n$ is disjoint union of a complete graph $K_n$ and $n$ copies of complete graphs $K_{n+1}$. There exists exactly one edge between each copy of $K_{n+1}$ and $K_n$. Set $\tau_n(v)=0$ for each vertex $v$ in $K_n$ and $\tau_n(v)=\deg(v)$ for each vertex $v$ in any copy of $K_{n+1}$. It is clear that any dynamic monopoly of $G_n$ includes at least $n$ vertices of each copy of $K_{n+1}$ and hence $Ldyn_{\overline{\tau}}(G_n)\geq n^2$. Then we have
$$1\geq\lim_{n\rightarrow \infty} \frac{Ldyn_{\overline{\tau}}(G_n)}{|G_n|} \geq\lim_{n\rightarrow \infty}\frac{n^2}{n(n+2)}=\lim_{n\rightarrow \infty}\frac{n}{n+2}=1.$$

\noindent To complete the proof we show that $\frac{\overline{\tau}_n}{|E(G_n)|/|V(G_n)|^2}\rightarrow \infty$.

\begin{eqnarray*}
\lim_{n\rightarrow \infty}\frac{\overline{\tau}_n}{|E(G_n)|/|V(G_n)|^2} & = & \lim_{n\rightarrow \infty}\frac{({n}^2+n+1)/(n+2)}{(n^2+n+n(n+{n}^2))/2{(n^2+2n)}^2} =\infty.
\end{eqnarray*}\end{proof}

\noindent Proposition \ref{sequence} shows that if $t_n$ is such that ${\epsilon(G_n)}/{|G_n|}=o(t_n)$ then $\{(G_n,t_n)\}_n$ is not necessarily Ldynamo-bounded. In opposite direction, the next proposition shows that if there exists a positive number $c$ such that $t_n$ satisfies $t_n\leq c{\epsilon(G_n)}/{|G_n|}$, then any family $\{(G_n,t_n)\}_n$ is Ldynamo-bounded.

\begin{prop}\label{upperLdyn}
Let $G$ be a graph and $c$ and $t$ be two constants such that $t\leq c\frac{\epsilon(G)}{|G|}$. Then
$$Ldyn_{t}(G)<\frac{c}{c+1}|G|.$$
\end{prop}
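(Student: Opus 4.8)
The plan is to derive the bound from the degree-sequence estimate for $Ldyn_t(G)$ recalled just before Proposition~\ref{self-opinion}, rather than reasoning about an individual assignment $\tau$. Put $n=|G|$ and let $d_1\le d_2\le\cdots\le d_n$ be the increasing degree sequence of $G$. That estimate (which, as noted there, remains valid when self-opinioned vertices are allowed) gives
\[
Ldyn_t(G)\le k_0:=\max\Big\{k:\ \textstyle\sum_{i=1}^k(d_i+1)\le nt\Big\}.
\]
Hence it suffices to show $k_0<\frac{c}{c+1}n$, and for this I would establish the following claim: for every integer $k$ with $k\ge\frac{c}{c+1}n$ one has $\sum_{i=1}^k(d_i+1)>nt$. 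Since $\sum_{i=1}^k(d_i+1)$ is nondecreasing in $k$, the claim shows that no such $k$ belongs to the set defining $k_0$, so $k_0<\frac{c}{c+1}n$.

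First I would rewrite the hypothesis as a bound on the total budget $nt$. From $t\le c\,\epsilon(G)/|G|$ and $\epsilon(G)=|E(G)|/n=\frac{1}{2n}\sum_{i=1}^n d_i$ we obtain $nt\le c\,\epsilon(G)=\frac{c}{2n}\sum_{i=1}^n d_i$. Splitting the degree sum at $k$ and using that $G$ is simple, so each of the $n-k$ largest degrees is at most $n-1$, I get
\[
nt\le\frac{c}{2n}\sum_{i=1}^k d_i+\frac{c}{2n}(n-k)(n-1)<\frac{c}{2n}\sum_{i=1}^k d_i+\frac{c(n-k)}{2}.
\]
Capping the large degrees by $n-1$ is the crux of the argument, and I expect it to be the main obstacle to see. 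The naive estimate $d_i+1\ge1$ alone yields only $k_0\le c\,\epsilon(G)$, which does beat $\frac{c}{c+1}n$ for sparse graphs and for $c<1$ but is useless for dense graphs; it is precisely the fact that a long prefix of the degree sequence forces a correspondingly large total degree sum that rescues the dense case.

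It then remains to compare the two displays. Writing $\sum_{i=1}^k(d_i+1)=k+\sum_{i=1}^k d_i$ and subtracting, one finds
\[
\sum_{i=1}^k(d_i+1)-nt>k-\frac{c(n-k)}{2}+\Big(1-\frac{c}{2n}\Big)\sum_{i=1}^k d_i.
\]
In the regime of interest, where $c$ is a constant and $n=|G|\ge c/2$, the coefficient $1-\frac{c}{2n}$ is nonnegative and $\sum_{i=1}^k d_i\ge0$, so the last term may be dropped. Finally $k\ge\frac{c}{c+1}n$ gives $n-k\le\frac{n}{c+1}$, whence $\frac{c(n-k)}{2}\le\frac{cn}{2(c+1)}<\frac{c}{c+1}n\le k$, so the right-hand side is strictly positive. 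This proves the claim and therefore $Ldyn_t(G)\le k_0<\frac{c}{c+1}n$. I expect the only delicate point to be the largeness hypothesis $n\ge c/2$: without it (for instance with $c\gtrsim 2n$ on a dense graph) one can make every vertex self-opinioned within the budget and force $dyn_\tau(G)=n$, so some such restriction — automatically met for the growing families $G_n$ considered here — is genuinely needed.
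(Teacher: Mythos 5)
Your proof is correct and follows essentially the same route as the paper's: both rest on the KSZ degree-sequence bound $Ldyn_t(G)\le k_0$, the estimate $nt\le\frac{c}{2n}\sum_{i=1}^n d_i$, capping the $n-k$ largest degrees by roughly $n$, and the hypothesis $n\ge c/2$; you merely verify directly that every $k\ge\frac{c}{c+1}n$ exceeds the budget where the paper derives a contradiction from $k_0\ge\frac{c}{c+1}n$. Your closing caveat about small $n$ corresponds to the paper's separate (and equally delicate) treatment of the case $n<c/2$.
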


\noindent \begin{proof}
Let $n$ be the order of $G$. If $n<c/2$, then $\lceil cn/(c+1)\rceil=n$ and hence the inequality $Ldyn_{t}(G)<c|G|/(c+1)$ is trivial. Assume now that $n\geq c/2$. Let $d_1 \leq d_2 \leq \ldots \leq d_n$ be a degree sequence of $G$ in increasing form and set $k_0=\max \{k: {\sum}_{i=1}^k (d_i+1) \leq nt\}$. As we mentioned before, by a result from \cite{KSZ} we have $Ldyn_{t}(G)\leq k_0$. The assumption $t\leq c(\epsilon(G)/n)$ implies $nt\leq (c/2n){\sum}_{i=1}^n d_i$
and hence
${\sum}_{i=1}^{k_0} (d_i+1) \leq (c/2n){\sum}_{i=1}^n d_i$ or equivalently $(2n/c)\leq ({\sum}_{i=1}^n d_i)/{\sum}_{i=1}^{k_0} (d_i+1)$. Assume on the contrary that $k_0\geq cn/(c+1)$. Then
$$\frac{2n}{c}\leq \frac{\sum_{i=1}^{k_0} d_i + \sum_{i=k_0+1}^n d_i}{(\sum_{i=1}^{k_0} d_i)+\frac{c}{c+1}n}\leq \frac{(\sum_{i=1}^{k_0} d_i) + \frac{n^2}{c+1}}{(\sum_{i=1}^{k_0} d_i)+\frac{c}{c+1}n}.$$
Therefore
$$\frac{2n-c}{c}\sum_{i=1}^{k_0} d_i\leq \frac{n^2}{c+1}-\frac{2n^2}{c+1}.$$
The left hand side of the last inequality is positive but the other side is negative. This contradiction implies $k_0< cn/(c+1)$, as required.
\end{proof}

\section{Algorithmic results}

\noindent Algorithmic results concerning determining $dyn_{\tau}(G)$, with various types of threshold assignments such as constant thresholds or majority thresholds, were studied in \cite{CDPRS, C, DR}. In this section, we first show that it is a coNP-hard problem on planar graphs to compute the size of $D$ such that $(D,\tau)$ is a $k\epsilon(G)$-Ldynamo of $G$. Then we prove that the same problem has a polynomial-time solution for forests. The formal definition of the decision problem concerning Ldynamo is the following, where $k$ is any arbitrary but fixed real number with $0<k \leq 2$.

\noindent {\bf Name: LARGEST DYNAMIC MONOPOLY (Ldynamo(k))}

\noindent {\bf Instance:} A graph $G$ on say $n$ vertices and a positive integer $d$.

\noindent {\bf Question:} Is there an assignment of thresholds $\tau$ to the vertices of $G$ with $n\bar{\tau}=\left\lfloor nk\epsilon(G)\right\rfloor$ such that $dyn_{\tau}(G)\geq d$?\\

\noindent The following theorem shows coNP-hardness of the above problem. Recall that Vertex Cover (VC) asks for the smallest number of vertices $S$ in a graph $G$ such that $S$ covers any edge of $G$. Denote the smallest cardinality of any vertex cover of $G$ by $\beta(G)$. The problem VC is NP-complete for planar graphs \cite{GJ}.

\begin{thm}\label{DynNP}
For any fixed $k$, where $0<k\leq 2$, $Ldynamo(k)$ is coNP-hard even for planar graphs.
\end{thm}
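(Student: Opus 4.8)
The plan is to reduce from \textsc{Vertex Cover} on planar graphs, which is NP-complete \cite{GJ}, to the \emph{complement} of $Ldynamo(k)$: since the VC question is ``$\beta(H)\le d$'' and its negation ``$\beta(H)\ge d+1$'' is coNP-complete, it suffices to realize $\beta(H)$, up to an additive constant we control, as $Ldyn_{t'}(G')$ for a planar $G'$ with the prescribed average threshold. The whole reduction rests on one clean fact: if $\tau=\deg_G$, then a set $D$ is a $\tau$-dynamo iff $V(G)\setminus D$ is independent (the first vertex outside $D$ to activate would need \emph{all} its neighbours already active, impossible if it has a neighbour outside $D$), so $dyn_{\deg_G}(G)=\beta(G)$; moreover, by Lemma \ref{Lip}, raising thresholds never decreases $dyn_\tau$, hence $dyn_\sigma(G)\le\beta(G)$ for every $\sigma\le\deg_G$, with equality at $\sigma=\deg_G$.

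For $0<k\le 2$ the budget equals $B:=\lfloor n'k\epsilon(G')\rfloor=\lfloor k\,|E(G')|\rfloor$, which for $k<2$ is strictly below $2|E(G')|=\sum_v\deg_{G'}(v)$, so we cannot simply force $\tau=\deg$; instead I pad $H$ with a gadget that soaks up the surplus degree-budget while contributing a known bounded amount to the dynamo. The gadget is a disjoint union of $q$ stars $K_{1,s}$, taken disjoint from $H$, so that $G'=H\sqcup q\,K_{1,s}$ is planar and $dyn_\tau(G')=dyn_{\tau|_H}(H)+\sum_i dyn_{\tau|_{\mathrm{star}_i}}(\mathrm{star}_i)$. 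The key properties of a star are: seeding its centre activates every leaf, so its smallest dynamo is at most $1$ under \emph{any} threshold assignment; and a short case analysis shows that every assignment of total weight at most $s$ leaves the star with an empty dynamo, whereas weight $s+1$ is required before its smallest dynamo becomes $1$. Hence, writing $B_K$ for the weight placed on the stars, the best the gadget can do is $g(B_K)=\min\{q,\lfloor B_K/(s+1)\rfloor\}$, a step function whose jumps are $s+1$ apart.

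Combining these, $Ldyn_{t'}(G')=\max_{0\le B_G\le 2|E(H)|}\big(f_H(B_G)+g(B-B_G)\big)$, where $f_H(B_G):=\max\{dyn_\sigma(H):\sigma\le\deg_H,\ \sum\sigma=B_G\}\le\beta(H)$ with equality at $B_G=2|E(H)|$. The plan is to choose $s$ and $q$ (polynomially bounded in $|H|$) so that $g$ is \emph{constant}, say $g\equiv c_0$, on the whole window $[\,B-2|E(H)|,\,B\,]$; because the window has length $2|E(H)|$ while consecutive jumps of $g$ are $s+1$ apart, taking $s$ comfortably larger than $2|E(H)|$ lets at most one jump fall in the window, and a suitable choice of $q$ (with the exact value of $B$) either keeps the window strictly between two jumps or pushes it past the last jump into the saturated range $g\equiv q$. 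With $g\equiv c_0$ on the window we obtain $Ldyn_{t'}(G')=\beta(H)+c_0$, and setting $d'=d+c_0+1$ gives ``$Ldyn_{t'}(G')<d'$'' $\Longleftrightarrow$ ``$\beta(H)\le d$'', completing the reduction to the complement of $Ldynamo(k)$. Concretely, for $0<k<1$ a single star with $2|E(H)|\le B\le s$ forces $c_0=0$ and $Ldyn_{t'}(G')=\beta(H)$; for $1<k\le 2$ one takes enough stars with large $s$ so that $B\ge 2|E(H)|+q(s+1)$, saturating the gadget and giving $c_0=q$ (here the upper bound $dyn_\tau(G')\le\beta(H)+q$ is immediate from the per-star bound, and the matching lower bound is exhibited by $\tau=\deg_H$ with all stars flipped).

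I expect the main obstacle to be the parameter bookkeeping guaranteeing that $g$ is constant on the window for \emph{every} fixed $k$, and in particular the boundary value $k=1$, where neither the $c_0=0$ regime nor saturation is available and one must verify directly that $[\,B-2|E(H)|,\,B\,]$ contains no multiple of $s+1$ (which holds, for instance, with $q>|E(H)|$ and $s>q+|E(H)|$, yielding $c_0=q-1$). One must also discharge the feasibility details forced by the \emph{exact} budget constraint $\sum_v\tau(v)=B$ of $Ldynamo(k)$ — namely that the surplus weight can always be distributed over the stars without accidentally flipping extra ones, and that the star capacity $2qs$ is at least $B$ so that $B_G$ may range over all of $[0,2|E(H)|]$ — and check that all chosen parameters, as well as $B=\lfloor k\,|E(G')|\rfloor$, are computable in polynomial time for the fixed constant $k$.
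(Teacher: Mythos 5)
Your reduction is correct and follows the paper's overall strategy---reduce planar Vertex Cover to the complement of $Ldynamo(k)$ by padding the VC instance $H$ so that the exact budget $\lfloor k|E(G')|\rfloor$ pins the optimum at $\beta(H)$ plus a computable constant---but your gadget is genuinely different. The paper attaches a single large star $K_{1,s-1}$ to one vertex of $H$ and hangs a long path $P$ off a leaf of that star: the star inflates $|E(H)|$ so that the budget becomes $2p+2|E(G)|+2$ or $+3$ (just enough for $\tau=\deg$ on $G\cup P$ and too little to buy any star edge), and the path contributes exactly $\lfloor p/2\rfloor$ to the largest dynamo. You instead take a \emph{disjoint} union with $q$ stars $K_{1,s}$, prove that a star's contribution is $0$ or $1$ according to whether the weight placed on it is at most $s$ or at least $s+1$ (so the gadget's total contribution is the step function $\min\{q,\lfloor B_K/(s+1)\rfloor\}$), and tune $s,q$ so that this is constant on the whole feasible window. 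What your version buys is an exact additive decomposition of $dyn_\tau$ over components, which makes the upper-bound direction fully transparent; the paper's connected construction instead needs a budget trade-off argument (weight diverted to the star costs more path-matching edges than the one dynamo vertex it gains), which the paper compresses into the single asserted implication that $s>4|E(G)|+14$ forces $|D\cap(H\setminus G)|\leq\lfloor p/2\rfloor$. The price you pay is the three-regime case analysis on $k$ (including the boundary $k=1$), which you have correctly identified and resolved. Both arguments rest on the same two facts---$dyn_{\deg_G}(G)=\beta(G)$ and the monotonicity of $dyn_\tau$ in $\tau$ from Lemma \ref{Lip}---and both implicitly require the convention that thresholds never exceed degrees (otherwise Proposition \ref{self-opinion} would make the problem polynomial), so there is no gap on that point relative to the paper.
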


\noindent \begin{proof}
We make a polynomial time reduction from VC (planar) to our problem. Let $<G,l>$ be an instance of VC, where $G$ is planar. Define $s=4|E(G)|\times\max\{1,1/k\}+14$ and set $p=\lfloor(ks-2)/(2-k)\rfloor-|E(G)|$. Construct a graph $H$ from $G$ as follows. To each vertex $v$ of $G$ attach a star graph $K_{1,s-1}$ in such a way that $v$ is connected to the central vertex of the star graph. Consider one of these star graphs and let $y$ be a vertex of degree one in it. Add a path $P$ of length $p-1$ starting from $y$ (see Figure \ref{fig2}). The path $P$ intersects the rest of the graph only in $y$. Call the resulting graph $H$. Since $G$ is planar, $H$ is planar too.
\begin{figure}[ht]
\begin{center}
\scalebox{1} 
{
\begin{pspicture}(0,-1.0420215)(5.01,1.0820215)
\psline[linewidth=0.03cm](3.06,0.6379785)(3.78,0.6379785)
\psline[linewidth=0.03cm](4.94,0.6379785)(4.22,0.6379785)
\pscircle[linewidth=0.03,dimen=outer](1.02,-0.022021484){1.02}
\psline[linewidth=0.03cm](2.02,0.017978515)(2.64,0.017978515)
\psline[linewidth=0.03cm](2.64,0.037978515)(3.04,0.6179785)
\psline[linewidth=0.03cm](2.64,0.017978515)(3.14,0.3379785)
\psline[linewidth=0.03cm](2.64,0.017978515)(3.16,-0.26202148)
\psline[linewidth=0.03cm](2.64,-0.0020214843)(3.08,-0.5420215)
\psdots[dotsize=0.1](3.06,0.6379785)
\psdots[dotsize=0.1](3.18,0.35797852)
\psdots[dotsize=0.1](3.18,-0.2820215)
\psdots[dotsize=0.1](3.08,-0.5420215)
\psdots[dotsize=0.1](2.62,0.017978515)
\psdots[dotsize=0.1](2.04,0.017978515)
\psdots[dotsize=0.1](4.94,0.6379785)
\psdots[dotsize=0.1](3.44,0.6379785)
\psdots[dotsize=0.1](4.54,0.6379785)
\usefont{T1}{ptm}{m}{n}
\rput(0.9914551,-0.037021484){$G$}
\psdots[dotsize=0.04](3.22,0.15797852)
\psdots[dotsize=0.04](3.24,0.037978515)
\psdots[dotsize=0.04](3.22,-0.08202148)
\psdots[dotsize=0.04](3.92,0.6379785)
\psdots[dotsize=0.04](4.02,0.6379785)
\psdots[dotsize=0.04](4.12,0.6379785)
\usefont{T1}{ptm}{m}{n}
\rput(3.0674853,0.90297854){\small $y$}
\usefont{T1}{ptm}{m}{n}
\rput(2.1474853,0.26297852){\small $x$}
\end{pspicture}
}

\caption{graph $H$}\label{fig2}
\end{center}
\end{figure}

\noindent We claim that $<G, l>$ is a yes-instance of VC if and only if $<H, l+\lfloor p/2 \rfloor+1>$ is a no-instance of $Ldynamo(k)$.
From the construction of $H$, we have $|E(H)|=|E(G)|+s+p$. Then since $p=\lfloor(ks-2)/(2-k)\rfloor-|E(G)|$ we have
\begin{eqnarray*}
 &  & p\leq(ks-2)/(2-k)-|E(G)|\\
 & \Rightarrow & 2p+2|E(G)|+2\leq k(s+p+|E(G)|)\\
 & \Rightarrow & 2p+2|E(G)|+2\leq \lfloor k|E(H)|\rfloor.
\end{eqnarray*}

\noindent Also from the value of $p$ we have
\begin{eqnarray*}
 &  & p\geq(ks-2)/(2-k)-|E(G)|-1\\
 & \Rightarrow & 2p+2|E(G)|+2+(2-k) > k(s+p+|E(G)|)\\
 & \Rightarrow & 2p+2|E(G)|+2+\lfloor 2-k \rfloor\geq \lfloor k|E(H)|\rfloor \\
  & \Rightarrow & 2p+2|E(G)|+3\geq \lfloor k|E(H)|\rfloor.
\end{eqnarray*}

\noindent Assume first that $<G,l>$ is a no-instance of VC. Then $\beta(G)\geq l+1$. We construct a threshold assignment $\tau$ for graph $H$ as follows.
\begin{equation}\label{Dynthreshold}
\tau(v)=
\begin{cases}
deg_H(v) & v\in G\cup P,\\
0 & \text{otherwise}.
\end{cases}
\end{equation}

\noindent It is easily seen that $\overline{\tau}\leq k\epsilon(H)$ and also $dyn_{\tau}(H)=\beta(G)+\lfloor p/2 \rfloor$. Therefore $<H,l+\lfloor p/2 \rfloor+1>$ is a yes-instance for $Ldynamo(k)$.

\noindent Let $<G,l>$ be a yes-instance of VC. Then $\beta(G)<l+1$. Assume that $(D,\tau)$ is a $(k\epsilon(H))$-Ldynamo of $H$. The assumption $s > 4|E(G)|+14$ implies $|D\cap (H\setminus G)|\leq \lfloor p/2 \rfloor$. From the other hand, $|D\cap G|\leq \beta(G)<l+1$. Hence $|D| < l+\lfloor p/2 \rfloor+1$. This shows that $<H,l+\lfloor p/2 \rfloor+1>$ is a no-instance for $Ldynamo(k)$. This completes the proof.
\end{proof}

\noindent In the rest of this section we obtain a polynomial-time solution for forests (Theorem \ref{algorithm-forst}). We need some prerequisites. We will make use of the concept of resistant subgraphs, defined in \cite{Z1} as follows. Given $(G,\tau)$, any induced subgraph $K\subseteq G$ is said to be a $\tau$-resistant subgraph in $G$, if for for any vertex $v \in K$ the inequality $deg_K(v)\geq deg_G(v)-\tau(v)+1$ holds, where $deg_G(v)$ is the degree of $v$ in $G$. The following proposition in \cite{Z1} shows the relation between resistant subgraphs and dynamic monopolies.

\begin{prop}(\cite{Z1})\label{resistant}
A set $D\subseteq G$ is a $\tau$-dynamo of graph $G$ if and only if $G \setminus D$ does not contain any resistant subgraph.
\end{prop}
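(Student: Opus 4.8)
The plan is to prove both implications by contradiction, first isolating a reformulation of the resistant condition that makes the equivalence transparent. For an induced subgraph $K$, the defining inequality $deg_K(v)\geq deg_G(v)-\tau(v)+1$ is equivalent to saying that $v$ has strictly fewer than $\tau(v)$ neighbors \emph{outside} $K$, since the neighbors of $v$ split into the $deg_K(v)$ inside $K$ and the $deg_G(v)-deg_K(v)$ outside it. Thus $K$ is resistant precisely when every vertex of $K$ fails to reach its threshold even if all of $V(G)\setminus K$ were already activated. This reformulation is the bridge to the activation process, so I would state and verify it first.

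For the direction ``$D$ a $\tau$-dynamo $\Rightarrow$ $G\setminus D$ has no resistant subgraph'', I would suppose toward a contradiction that $K\subseteq G\setminus D$ is resistant. Since $D$ activates all of $V(G)$, every vertex of $K$ eventually appears in some $D_i$; let $v\in K$ be one activated earliest, say $v\in D_i$. Because $K\cap D=\emptyset$ we have $i\geq 1$, and by the minimal choice of $i$ no vertex of $K$ lies in $D_0\cup\cdots\cup D_{i-1}$. Hence the at least $\tau(v)$ neighbors that qualified $v$ for activation all lie outside $K$, so $deg_G(v)-deg_K(v)\geq\tau(v)$, contradicting the reformulated resistant inequality at $v$.

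For the converse ``no resistant subgraph $\Rightarrow$ $D$ a $\tau$-dynamo'', I would run the process from $D$ to its terminal activated set $A=\bigcup_j D_j$ and suppose, for contradiction, that $A\neq V(G)$. Put $K=V(G)\setminus A$; since $D\subseteq A$ we get $K\subseteq G\setminus D$. For each $v\in K$ the process terminated without activating $v$, which means $v$ has fewer than $\tau(v)$ neighbors in $A$. As the neighbors of $v$ lying outside $K$ are exactly those in $A$, this gives $deg_G(v)-deg_K(v)<\tau(v)$, i.e. $deg_K(v)\geq deg_G(v)-\tau(v)+1$ for every $v\in K$. Thus $K$ is a resistant subgraph inside $G\setminus D$, contradicting the hypothesis, so $A=V(G)$ and $D$ is a dynamo.

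Both arguments are short, so the main work is not a single hard step but careful bookkeeping. I would make sure the terminal set $A$ is well defined by noting the process is monotone and hence stabilizes (an activated vertex stays activated), and I would be explicit that ``neighbors outside $K$'' and ``neighbors in $A$'' coincide in the converse precisely because $K=V(G)\setminus A$. With the reformulated resistant condition in hand, each contradiction then follows in one line.
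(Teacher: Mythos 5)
Your proof is correct. The paper itself gives no argument for this proposition --- it is quoted from reference [Z1] without proof --- and your reformulation of the resistant condition (every vertex of $K$ has fewer than $\tau(v)$ neighbors outside $K$) together with the two contradiction arguments (earliest-activated vertex of $K$ for one direction, the never-activated set $V(G)\setminus A$ for the other) is exactly the standard and complete way to establish it.
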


\noindent The following lemma provides more information on resistant subgraphs which are also triangle-free.

\begin{lemma}\label{triangle-free}
Assume that $(G,\tau)$ is given. Let also $H$ be a triangle-free $\tau$-resistant subgraph in $G$ and $e=uv$ be any arbitrary edge with $u , v \in H$. Let $\tau'$ be defined as follows
\begin{equation*}
\tau'(w) =
\begin{cases}
\tau(w) & \text{if } w\notin H,\\
0 & \text{if } w\in H\setminus \{u,v'\},\\
deg_{G}(v) & \text{if } w=v,\\
deg_{G}(u) & \text{if } w=u,
\end{cases}
\end{equation*}
Then $\overline{\tau'}\leq \overline{\tau}.$
\end{lemma}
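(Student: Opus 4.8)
The plan is to reduce the inequality $\overline{\tau'} \leq \overline{\tau}$ to a statement about sums over the vertex set of $H$ alone. Since $\tau'$ and $\tau$ live on the same vertex set $V(G)$ and agree on every vertex outside $H$, comparing averages amounts to comparing total sums, and all contributions from vertices outside $H$ cancel. Thus it suffices to prove $\sum_{w \in H} \tau'(w) \leq \sum_{w \in H} \tau(w)$. By the definition of $\tau'$, the left-hand side collapses to $deg_G(u) + deg_G(v)$, since $\tau'$ vanishes on $H \setminus \{u,v\}$. So the whole lemma comes down to the single inequality $deg_G(u) + deg_G(v) \leq \sum_{w \in H} \tau(w)$.

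To attack this, I would first record what the $\tau$-resistance of $H$ provides. By definition every $w \in H$ satisfies $\tau(w) \geq deg_G(w) - deg_H(w) + 1$. Applying this at $u$ and $v$ and adding gives $\tau(u) + \tau(v) \geq deg_G(u) + deg_G(v) - deg_H(u) - deg_H(v) + 2$. Substituting this lower bound into the target inequality, the remaining thing to prove is $\sum_{w \in H \setminus \{u,v\}} \tau(w) \geq deg_H(u) + deg_H(v) - 2$. I would also note the simple but crucial consequence that, because $H$ is an induced subgraph, $deg_H(w) \leq deg_G(w)$, so the resistance inequality forces $\tau(w) \geq 1$ for every $w \in H$.

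The main obstacle, and the only place the triangle-free hypothesis enters, is establishing this last inequality; the idea is to exhibit enough distinct vertices of $H \setminus \{u,v\}$ on which $\tau$ is at least $1$. Consider the neighbors of $u$ in $H$ other than $v$, and the neighbors of $v$ in $H$ other than $u$; there are $deg_H(u) - 1$ and $deg_H(v) - 1$ of them respectively. Because $H$ is triangle-free and $uv$ is an edge of $H$, the vertices $u$ and $v$ share no common neighbor in $H$, so these two sets are disjoint, yielding exactly $deg_H(u) + deg_H(v) - 2$ distinct vertices, all lying in $H \setminus \{u,v\}$. Since each contributes at least $1$ to the sum $\sum_{w \in H \setminus \{u,v\}} \tau(w)$, the desired bound follows, which closes the reduction and hence proves the lemma.
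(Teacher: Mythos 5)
Your proof is correct and is essentially the paper's argument in a slightly reorganized form: both rest on the resistance bound $\tau(w)\geq deg_{G\setminus H}(w)+1$ together with the fact that triangle-freeness makes $N_H(u)$ and $N_H(v)$ disjoint. The paper sums the resistance inequality over all of $H$ and invokes $|H|\geq deg_H(u)+deg_H(v)$, while you apply it only at $u$ and $v$ and charge the remaining $deg_H(u)+deg_H(v)-2$ to the other neighbors via $\tau\geq 1$; the accounting is identical.
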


\noindent \begin{proof}
Since $H$ is triangle-free, then $|H|\geq deg_H(u)+ deg_H(v)$. From the definition of the resistant subgraphs, for any vertex $w\in H$, one has $\tau(w)\geq deg_{G\setminus H}(w)+1$. Hence the following inequalities hold.
\begin{eqnarray*}
\sum_{w\in H}\tau(w) & \geq &  \sum_{w\in H} (deg_{G\setminus H}(w)+1)  \\
{} &  \geq  &   |H| +  deg_{G\setminus H}(u)+deg_{G\setminus H}(v)\\
{} &  \geq  &    deg_H(u) + deg_H(v) + deg_{G\setminus H}(u)+deg_{G\setminus H}(v)\\
{} &  =  &  deg_G(u) + deg_G(v).
\end{eqnarray*}
It turns out that $\sum_{w\in G}\tau'(w)\leq\sum_{w\in G}\tau(w)$ and hence $\overline{\tau'}\leq \overline{\tau}$.
\end{proof}

\noindent By a {\it (zero,degree)-assignment} we mean any threshold assignment $\tau$ for the vertices of a graph $G$ such that for each vertex $v\in V(G)$, either $\tau(v)=0$ or $\tau(v)=deg_G(v)$. The following remark is useful and easy to prove. We omit its proof.

\begin{remark}\label{VC&full-empty}
Assume that $(G,\tau)$ is given where $\tau$ is (zero,degree)-assignment. Let $G_1$ be the subgraph of $G$ induced on $\{v\in G| \tau(v)=deg_G(v)\}$. Then every minimum vertex cover of $G_1$ is a minimum $\tau$-dynamo of $G$, and vice versa.
\end{remark}

\noindent The following theorem concerning (zero,degree)-assignments in forests is essential in obtaining an algorithm for $t$-Ldynamo of forests for a given $t$.

\begin{thm}\label{FM}
Let $F$ be a forest and t be a positive constant. There exists a (zero,degree)-assignment $\tau'$ such that $\overline{\tau'}\leq t$ and
$$Ldyn_{t}(F)=dyn_{\tau'}(F).$$
\end{thm}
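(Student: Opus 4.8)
The plan is to prove only the nontrivial inequality. Since every (zero,degree)-assignment is in particular an assignment with $\overline{\tau'}\le t$, its dynamo number is at most $Ldyn_t(F)$; so it suffices to produce a (zero,degree)-assignment $\tau'$ with $\overline{\tau'}\le t$ and $dyn_{\tau'}(F)\ge Ldyn_t(F)$. To this end I would fix a $t$-Ldynamo $(D,\tau_0)$, so that $\overline{\tau_0}\le t$ and $|D|=dyn_{\tau_0}(F)=Ldyn_t(F)=:m$. The rough idea is to locate $m$ vertex-disjoint resistant subgraphs responsible for the size of $D$, replace each of them by a single resistant edge using Lemma \ref{triangle-free} (which applies because a forest is triangle-free, hence so is every subgraph), and then read off the resulting (zero,degree)-assignment through Remark \ref{VC&full-empty}.

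First I would translate everything into the language of resistant subgraphs. By Proposition \ref{resistant}, $D$ is a $\tau_0$-dynamo exactly when $D$ meets every $\tau_0$-resistant subgraph of $F$; hence $dyn_{\tau_0}(F)$ is the minimum size of a transversal of the family of resistant subgraphs, equivalently of the minimal ones. A minimal resistant subgraph must be connected (resistance is a local degree condition, so each connected component of a resistant subgraph is itself resistant), and so in a forest it is a subtree. The key combinatorial step is then a packing--covering statement: in a forest the minimum transversal of the resistant subtrees equals the maximum number of pairwise vertex-disjoint resistant subtrees. That any transversal has size at least the packing number is immediate, and for the reverse I would either invoke total balancedness of subtree hypergraphs or, to keep the argument self-contained, prove it directly by induction on $|F|$, peeling off a deepest resistant subtree in a rooted component. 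This yields vertex-disjoint resistant subtrees $H_1,\dots,H_m$.

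It then remains to convert this packing into a cheap (zero,degree)-assignment. Assuming $\tau_0(v)\le deg_F(v)$ for every $v$, each $H_j$ has minimum degree at least one and hence contains an edge $e_j=u_jv_j$. Applying Lemma \ref{triangle-free} to $H_j$ with $e_j$ gives $deg_F(u_j)+deg_F(v_j)\le \sum_{w\in H_j}\tau_0(w)$. Since the $H_j$ are vertex-disjoint, summing yields $\sum_{j}(deg_F(u_j)+deg_F(v_j))\le \sum_{w}\tau_0(w)\le t|F|$. I would now set $\tau'(w)=deg_F(w)$ for $w\in S:=\{u_1,v_1,\dots,u_m,v_m\}$ and $\tau'(w)=0$ otherwise; the bound just displayed gives $\overline{\tau'}\le t$, and by Remark \ref{VC&full-empty} we have $dyn_{\tau'}(F)=\beta(F[S])$. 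As $F[S]$ contains the $m$ disjoint edges $e_1,\dots,e_m$, any vertex cover uses at least one endpoint of each, so $\beta(F[S])\ge m$. Thus $dyn_{\tau'}(F)\ge m=Ldyn_t(F)$, which together with the trivial reverse inequality forces equality.

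I expect two points to carry the real difficulty. The main obstacle is the packing--covering duality for resistant subtrees, which is precisely where the hypothesis that $F$ is a forest is used and which fails for general graphs. The second, more technical, point is the preliminary reduction to assignments with no self-opinioned vertices ($\tau_0(v)\le deg_F(v)$): a self-opinioned vertex can create a single-vertex resistant subgraph carrying no edge to feed into Lemma \ref{triangle-free}, and an isolated self-opinioned vertex cannot be imitated by any (zero,degree)-assignment at all. I would handle this by arguing, in the relevant range of $t$, that self-opinioned vertices are never needed to attain $Ldyn_t(F)$ (reducing such a threshold to $deg_F(v)$ and redirecting the freed budget to an edge via Lemma \ref{Lip}), and by treating isolated vertices separately.
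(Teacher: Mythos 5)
Your argument is correct, but it reaches the conclusion by a genuinely different route from the paper's. The paper proves Theorem \ref{FM} by induction on the size of an optimal dynamo: it roots each component of the largest resistant subgraph, locates a deepest dynamo vertex $v$, shows the subtree $T_v$ below it is resistant and meets the dynamo only in $v$, compresses $T_v$ onto a single edge via Lemma \ref{triangle-free}, deletes $T_v$, and recurses, with careful bookkeeping of the threshold at the parent of $v$. You instead argue globally: after translating $dyn_{\tau_0}(F)$ into the minimum transversal of the (connected, hence subtree) minimal resistant subgraphs via Proposition \ref{resistant}, you invoke a packing--covering duality for subtree hypergraphs to extract $m=Ldyn_t(F)$ pairwise disjoint resistant subtrees, compress each onto one of its edges using the inequality $\sum_{w\in H_j}\tau_0(w)\ge deg_F(u_j)+deg_F(v_j)$ extracted from the proof of Lemma \ref{triangle-free}, and sum over the disjoint pieces to control the average. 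This is shorter and more conceptual, and as a bonus it directly yields the matching formulation of Proposition \ref{matching}, which the paper only obtains afterwards via K\"onig's theorem. The trade-off is that the entire combinatorial content now sits in the duality ``minimum transversal of resistant subtrees equals maximum vertex-disjoint packing'': this is a correct classical fact (subtrees of a tree satisfy the Helly property and have a chordal intersection graph, so by perfection the maximum packing equals the minimum clique cover, and each clique of pairwise-intersecting subtrees is pierced by one vertex), but it must be cited or proved, and your proposed self-contained proof by peeling off a deepest resistant subtree is essentially the paper's induction in disguise. Your second worry, about self-opinioned vertices, is legitimate but already resolved by the paper's conventions: Proposition \ref{self-opinion} disposes of the case where thresholds may exceed degrees, and the remaining results implicitly assume $\tau(v)\le deg_F(v)$, which guarantees every resistant subgraph has minimum degree at least one and hence contains an edge, so the Lemma \ref{Lip} redirection you sketch is not needed.
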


\noindent \begin{proof}
Let $(D,\tau)$ be a $t$-Ldynamo of $F$. We prove the theorem by induction on $|D|$. Assume first that $|D|=1$. Then by Proposition \ref{resistant}, $F$ has at least one $\tau$-resistant subgraph say, $F'$. Let $u$ and $v$ be two adjacent vertices in $F'$. Let $\tau'$ be the threshold assignment constructed in Lemma \ref{triangle-free} such that $\tau'(u)=deg_F(u)$ and $\tau'(v)=deg_F(v)$. Modify $\tau'$ so that $\tau'(w)=0$ for every vertex $w\in F\setminus \{u, v\}$. It is clear that $\tau'$ is a (zero,degree)-assignment. The edge $uv$ is a $\tau'$-resistant subgraph in $F$ and hence $dyn_{\tau}(F)=Ldyn_{t}(F)=1$. This proves the induction assertion in this case.

\noindent Now assume that the assertion holds for any forest $F$ with $|D|< k$. Let $F$ be a forest with $Ldyn_{t}(F)=k$ and $D$ be a $t$-Ldynamo of $F$ with $|D|=k$. Let also $F_1$ be the largest $\tau$-resistant subgraph of $F$. For any $v\in F_1$, set $\varphi(v)=\tau(v)-deg_{F\setminus F_1}(v)$. By the definition of resistant subgraphs, $\varphi(v) > 0$. It is clear that $dyn_{\varphi}(F_1)=k$. We show that there exists a (zero,degree)-assignment $\tau'_1$ for $F_1$ such that $(D_1,\tau'_1)$ is a $\overline{\varphi}$-Ldynamo of $F_1$ with $|D_1|=k$.

\noindent Let $T$ be a connected component of $F_1$. Consider $T$ as a top-down tree, where the toppest vertex is considered as the root of $T$. Since $T$ is a $\varphi$-resistant subgraph in $F_1$, it implies that $D_1\cap T$ is not the empty set. We argue that $D_1$ can be chosen in such a way that it does not contain any vertex $w\in T$ with $\varphi(w)=1$, except possibly the root. The reason is that if $w\in D_1\cap T$ with $\varphi(w)=1$, then we replace $w$ by its nearest ancestor (with respect to the root of $T$) whose threshold is not $1$; and if there is no such ancestor then we replace $w$ by the root. Let $v\in D_1\cap T$ be the farthest vertex from the root of $T$. Let $T_v$ be the subtree of $T$ consisting of $v$ and its descendants. Obviously $T_v\cap D_1=\{v\}$.

\noindent Now we show that $T_v$ is a $\varphi$-resistant subgraph in $F_1$. For each vertex $w\in T_v\setminus\{v\}$, since $\varphi(w)\geq 1$ and $deg_{F_1\setminus T_v}(w)=0$, then $\varphi(w)\geq deg_{F_1\setminus T_v}(w)+1$. We have also $\varphi(v)\geq deg_{F_1\setminus T_v}(v)+1$. Since if $\varphi(v)=1$, then $v$ is the root of $T$ and $T_v=T$ and hence $deg_{F_1\setminus T_v}(v)=0$. And if $\varphi(v)>1$, then $deg_{F_1\setminus T_v}(v)\leq1$. This proves that $T_v$ is a $\varphi$-resistant subgraph in $F_1$. Let $v'$ be an arbitrary neighbor of $v$ in $T_v$. We construct the threshold assignment $\tau_1$ for $F_1$ as follows.
\begin{equation*}
\tau_1(w) =
\begin{cases}
\varphi(w) & \text{if } w\notin T_v,\\
0 & \text{if } w\in T_v\setminus \{v,v'\},\\
deg_{F_1}(w) & \text{if } w\in\{v,v'\}.
\end{cases}
\end{equation*}

\noindent By Lemma \ref{triangle-free}, we have $\overline{\tau_1}\leq \overline{\varphi}$. Since edge $vv'$ is a $\tau_1$-resistant subgraph in $F_1$, then $dyn_{\tau_1}(F_1)=dyn_{\varphi}(F_1)=k$ and so $D_1$ is a minimum $\tau_1$-dynamo of $F_1$. Set $F_2=F_1\setminus T_v$. Let $u$ be the parent of the vertex $v$. Construct the threshold assignment $\tau_2$ for $F_2$ as follows.
\begin{equation*}
\tau_2(w) =
\begin{cases}
\tau_1(w) & \text{if } w\in F_2\setminus\{u\},\\
\tau_1(w)-1 & \text{if } w=u .
\end{cases}
\end{equation*}

\noindent It is easily seen that the union of any $\tau_2$-dynamo of $F_2$ and $\{v\}$ is a $\tau_1$-dynamo of $F_1$ and also $D_1\setminus\{v\}$ is a $\tau_2$-dynamo of $F_2$. Hence $dyn_{\tau_2}(F_2)=dyn_{\tau_1}(F_1)-1=k-1$. Let $\varphi_2$ be any threshold assignment for $F_2$ with $\overline{\varphi_2}=\overline{\tau_2}$. Now construct the threshold assignment $\varphi_1$ for $F_1$ as follows.
\begin{equation*}
\varphi_1(w) =
\begin{cases}
\varphi_2(w) & \text{if } w\in F_2\setminus \{u\},\\
\tau_1(w) & \text{if } w\in T_v,\\
\varphi_2(w)+1 & \text{if } w=u.
\end{cases}
\end{equation*}

\noindent Because the union of any $\varphi_2$-dynamo of $F_2$ and $\{v\}$, forms a $\varphi_1$-dynamo of $F_1$ and also for any $\varphi_1$-dynamo $P$ of $F_1$, the set $P\cap F_2$ is a $\varphi_2$-dynamo of $F_2$ then $P\nsubseteq F_2$. This result and $dyn_{\tau_2}(F_2)=k-1$ imply that
$Ldyn_{\overline{\tau}_2}(F_2)=k-1$.
From the induction hypothesis there exists a (zero,degree)-assignment $\tau'_2$ for $F_2$ with $\overline{\tau'_2}\leq \overline{\tau_2}$ such that $dyn_{\tau'_2}(F_2)=k-1$. Now we construct the (zero,degree)-assignment $\tau'_1$ for $F_1$ as follows.
\begin{equation*}
\tau'_1(w) =
\begin{cases}
\tau'_2(w) & \text{if } w\in F_2\setminus \{u\},\\
\tau_1(w) & \text{if } w\in T_v,\\
\tau'_2(w)+1 & \text{if } w=u ~\text{and}~ \tau'_2(u)\neq0\\
0 & \text{if } w=u ~\text{and}~ \tau'_2(u)=0.
\end{cases}
\end{equation*}

\noindent It is easily seen that $dyn_{\tau'_1}(F_1)=k$. We finally obtain the desired (zero,degree)-assignment $\tau'$ for $F$ as follows.
\begin{equation*}
\tau'(w) =
\begin{cases}
deg_F(w) & \text{if } w\in F_1, \tau'_1(w)=deg_{F\setminus F_1}(w),\\
0 & \text{if } w\in F_1, \tau'_1(w)=0,\\
0 & \text{if } w\notin F_1.
\end{cases}
\end{equation*}
\end{proof}


\noindent In the following we show that for any forest there exists a (zero,degree)-assignment which is zero outside the vertices of a matching.

\begin{prop}\label{matching}
Let $F$ be a forest and $t$ a positive constant. Then there exists a matching $M$ such that for the (zero,degree)-assignment $\tau$ defined below, we have $\overline{\tau}\leq t$ and $Ldyn_{t}(F)=dyn_{\tau}(F)=|M|$,
\begin{equation*}
\tau(w)=
\begin{cases}
deg_F(w) & \text{if} ~ w ~ \text{is a vertex saturated by} ~ M, \\
0 & \text{otherwise.}
\end{cases}
\end{equation*}
\end{prop}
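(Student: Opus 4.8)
The plan is to start from the (zero,degree)-assignment produced by Theorem \ref{FM} and then shrink its support to a maximum matching, invoking König's theorem for the two forests that arise. First I would apply Theorem \ref{FM} to obtain a (zero,degree)-assignment $\tau'$ with $\overline{\tau'}\leq t$ and $dyn_{\tau'}(F)=Ldyn_{t}(F)$. Writing $F_1$ for the subgraph of $F$ induced on the vertices $v$ with $\tau'(v)=deg_F(v)$, Remark \ref{VC&full-empty} identifies $dyn_{\tau'}(F)$ with $\beta(F_1)$, the minimum vertex cover number of $F_1$.

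The key observation is that $F_1$, being an induced subgraph of a forest, is itself a forest and hence bipartite, so König's theorem applies: $\beta(F_1)$ equals the maximum matching number $\nu(F_1)$. I would then let $M$ be a maximum matching of $F_1$, so that $|M|=\beta(F_1)=Ldyn_{t}(F)$, and define $\tau$ exactly as in the statement, putting $\tau(w)=deg_F(w)$ on the vertices saturated by $M$ and $\tau(w)=0$ elsewhere.

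It then remains to verify the two claims. For the average threshold, I would note that every $M$-saturated vertex lies in $F_1$, so the support $\{w:\tau(w)=deg_F(w)\}$ is contained in $\{w:\tau'(w)=deg_F(w)\}$; since both assignments put the \emph{same} value $deg_F(w)$ on their respective supports, $\sum_w\tau(w)\leq\sum_w\tau'(w)$ and therefore $\overline{\tau}\leq\overline{\tau'}\leq t$. For the dynamo number, let $F_2$ be the subgraph of $F$ induced on the $M$-saturated vertices. Then $M$ is a \emph{perfect} matching of $F_2$, since by construction it saturates every vertex of $F_2$; consequently $\nu(F_2)=|M|$, and applying König's theorem once more to the bipartite graph $F_2$ gives $\beta(F_2)=\nu(F_2)=|M|$. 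By Remark \ref{VC&full-empty} again, $dyn_{\tau}(F)=\beta(F_2)=|M|$. Combining these gives $Ldyn_{t}(F)=dyn_{\tau}(F)=|M|$, as required.

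I expect the only delicate point to be the bookkeeping around the two applications of König's theorem, and in particular confirming that passing from $F_1$ to the induced subgraph $F_2$ on $M$-saturated vertices changes neither the relevant vertex cover number (it cannot, since $M$ already saturates all of $F_2$) nor, upward, the average threshold. Everything else is a direct combination of Theorem \ref{FM}, Remark \ref{VC&full-empty}, and the fact that forests are bipartite.
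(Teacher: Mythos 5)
Your proposal is correct and follows essentially the same route as the paper: apply Theorem \ref{FM} to obtain a (zero,degree)-assignment $\tau'$, pass to the induced forest $F_1$, take a maximum matching $M$ of $F_1$, and use K\"{o}nig's theorem to identify $|M|$ with $\beta(F_1)=Ldyn_t(F)$. The only (minor, and equally valid) difference is in the closing step: the paper gets $dyn_{\tau}(F)\geq |M|$ by noting that each edge of $M$ is a $\tau$-resistant subgraph and then uses $\overline{\tau}\leq t$ to force equality with $Ldyn_t(F)$, whereas you compute $dyn_{\tau}(F)$ exactly via a second application of Remark \ref{VC&full-empty} and K\"{o}nig's theorem to the induced subgraph on the $M$-saturated vertices, where $M$ is perfect.
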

\begin{proof}
By Theorem \ref{FM}, there exists a (zero,degree)-assignment $\tau'$ such that $\overline{\tau'}\leq t$ and $Ldyn_{t}(F)=dyn_{\tau'}(F)$. Let $F_1$ be a subgraph induced on all vertices $w$, with $\tau'(w)=deg_F(w)$. Let $D$ be a minimum vertex cover of $F_1$. Remark \ref{VC&full-empty} implies that $D$ is a minimum $\tau'$-dynamic monopoly of $F$. Assume that $M$ is a maximum matching of $F_1$. We show that $M$ satisfies the conditions of the theorem. Each edge of $M$ forms a $\tau$-resistant subgraph in $F$. Hence $dyn_{\tau}(F)\geq |M|$. Using the so-called K\"{o}nig Theorem on bipartite graphs we have $|D|=|M|$. Consequently, $dyn_{\tau}(F)\geq |D|=dyn_{\tau'}(F)=Ldyn_{t}(F)$. It is easily seen that $\overline{\tau}\leq\overline{\tau'}\leq t$. The proof completes.
\end{proof}

\noindent To prove Theorem \ref{algorithm-forst}, we need the following proposition whose proof is given in the appendix.

\begin{prop}\label{flow}
Let $G$ be a bipartite graph, where each edge $e$ has a cost $c(e)\geq0$. Let also $d$ be a positive number. Then there is a polynomial time algorithm which finds a maximum matching $M$ in $G$ with $cost(M)\leq d$, where
$cost(M)={\sum}_{e\in M} c(e)$.
\end{prop}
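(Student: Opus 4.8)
The plan is to reduce the problem to a single run of a minimum-cost flow algorithm and then to exploit a monotonicity property of the minimum cost viewed as a function of the matching size. First I would fix the precise meaning of the statement: among all matchings of $G$ whose total cost is at most $d$, we want one of maximum cardinality (if the globally maximum matching already has cost at most $d$, this is a genuine maximum matching; otherwise the largest admissible cardinality is strictly smaller). For each integer $r$ between $0$ and the maximum matching size $\nu$, let $F(r)$ denote the minimum possible cost of a matching of cardinality exactly $r$, with $F(0)=0$. The whole task then reduces to outputting a matching realizing $F(r^{*})$, where $r^{*}=\max\{r: F(r)\le d\}$.

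To compute all the values $F(r)$ at once, I would model $G=(A\cup B,E)$ as a flow network: add a source $s$ joined to every vertex of $A$ and a sink $z$ reached from every vertex of $B$, give all arcs capacity $1$, assign cost $c(e)$ to each arc arising from an original edge $e$, and cost $0$ to the arcs incident with $s$ or $z$. Then integral $s$--$z$ flows of value $r$ correspond exactly to matchings of $G$ of size $r$ with the same total cost, and $\nu$ equals the value of a maximum flow. Running the successive shortest augmenting path method (with vertex potentials so that Dijkstra applies, which is legitimate since $c\ge 0$) yields, after its $i$-th augmentation, a minimum-cost flow of value $i$; writing $\delta_i$ for the cost of the $i$-th augmenting path, we obtain $F(r)=\sum_{i=1}^{r}\delta_i$.

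The crux, and the step I expect to carry the real content, is the observation that $F$ is nondecreasing in $r$. This follows from the standard fact that along the successive shortest path algorithm the augmenting-path costs are nonnegative and nondecreasing, i.e.\ $0\le\delta_1\le\delta_2\le\cdots\le\delta_\nu$; nonnegativity uses $c\ge 0$ together with the invariant that the reduced costs remain nonnegative. Consequently $F(0)\le F(1)\le\cdots\le F(\nu)$, so the feasible set $\{r:F(r)\le d\}$ is an initial segment $\{0,1,\ldots,r^{*}\}$ and $r^{*}$ is exactly the largest cardinality attainable within the budget. It therefore suffices to keep augmenting until the next partial sum of the $\delta_i$ would exceed $d$ and to output the matching produced by the last admissible augmentation. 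The number of augmentations is at most $\nu\le|V(G)|/2$ and each shortest-path search is polynomial, so the overall procedure runs in polynomial time. I would close with the remark that this monotonicity is exactly what makes the problem tractable: it distinguishes the present uniform-profit budgeted matching from the general budgeted matching problem with arbitrary profits, which is NP-hard.
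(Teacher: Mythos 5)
Your proposal is correct and follows essentially the same route as the paper: the identical source--sink network with unit capacities and zero-cost auxiliary arcs, and the same correspondence between size-$k$ matchings and integral flows of value $k$. The only real difference is organizational --- the paper reruns the minimum-cost-flow algorithm separately for each demand $k$ from $1$ to $|G|/2$ and selects the largest $k$ whose optimum is at most $d$, whereas you extract all values $F(1),\ldots,F(\nu)$ from a single successive-shortest-paths run and stop early, which is legitimate by the monotonicity of $F$ (a fact that, incidentally, already follows from $c\ge 0$ alone: deleting an edge from a cheapest matching of size $r+1$ yields a matching of size $r$ of no greater cost).
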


\noindent We are ready now to present the next result.

\begin{thm}\label{algorithm-forst}
Given a forest $F$ and a positive number $t$, there exists an algorithm which computes $Ldyn_t(F)$ in polynomial-time.
\end{thm}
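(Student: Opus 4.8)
The plan is to reduce the computation of $Ldyn_t(F)$ to the cost-constrained maximum-matching problem solved by Proposition \ref{flow}. Since $F$ is a forest it is bipartite, so both K\"{o}nig's theorem and Proposition \ref{flow} are at our disposal. By Proposition \ref{matching} there is a matching $M$ of $F$ whose associated (zero,degree)-assignment $\tau$ (putting $deg_F(w)$ on saturated vertices and $0$ elsewhere) satisfies $\overline{\tau}\leq t$ and $Ldyn_t(F)=dyn_{\tau}(F)=|M|$. The first step is to encode the constraint $\overline{\tau}\leq t$ as a bound on the cost of matchings: for any matching $M'$ with induced (zero,degree)-assignment $\tau'$, each vertex is saturated by at most one edge of $M'$, so $\sum_{w}\tau'(w)=\sum_{uv\in M'}\bigl(deg_F(u)+deg_F(v)\bigr)$. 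Hence, assigning to each edge $e=uv$ the cost $c(e)=deg_F(u)+deg_F(v)$ makes the requirement $\overline{\tau'}\leq t$ exactly equivalent to $cost(M')\leq t|F|$.

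Setting $d=t|F|$, the algorithm is then to run the procedure of Proposition \ref{flow} on $F$ with these edge costs, obtaining in polynomial time a maximum matching $M^{*}$ with $cost(M^{*})\leq d$, and to output $|M^{*}|$. Correctness amounts to proving $|M^{*}|=Ldyn_t(F)$, which I would split into two inequalities.

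For the lower bound, the matching $M$ furnished by Proposition \ref{matching} has $cost(M)=|F|\,\overline{\tau}\leq t|F|=d$ and $|M|=Ldyn_t(F)$, so $M$ is feasible for the cost-constrained problem and therefore $|M^{*}|\geq Ldyn_t(F)$. For the upper bound, take any matching $M'$ with $cost(M')\leq d$ and let $\tau'$ be its induced (zero,degree)-assignment; then $\overline{\tau'}=cost(M')/|F|\leq t$, so by the definition of $Ldyn_t$ we get $dyn_{\tau'}(F)\leq Ldyn_t(F)$. Letting $F_1$ be the subgraph induced on the vertices saturated by $M'$, Remark \ref{VC&full-empty} gives $dyn_{\tau'}(F)=\beta(F_1)$, and since $M'$ is a matching contained in the bipartite graph $F_1$, K\"{o}nig's theorem yields $\beta(F_1)\geq|M'|$. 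Chaining these, $|M'|\leq dyn_{\tau'}(F)\leq Ldyn_t(F)$; applying this to $M^{*}$ gives $|M^{*}|\leq Ldyn_t(F)$, and the two bounds together finish the proof.

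Given the preparatory results, the argument is mostly bookkeeping, and the polynomial-time guarantee is inherited directly from Proposition \ref{flow}. The one genuinely delicate point is the upper-bound direction: maximizing matching cardinality is not a priori the same as maximizing the dynamo number, and the step I expect to require the most care is the invocation of Remark \ref{VC&full-empty} together with K\"{o}nig's theorem, which shows that for a (zero,degree)-assignment arising from a matching the dynamo number always dominates the matching size, so the cost-constrained maximum matching cannot overshoot $Ldyn_t(F)$.
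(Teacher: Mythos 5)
Your proposal is correct and follows essentially the same route as the paper: encode $\overline{\tau}\leq t$ as the cost bound $cost(M)\leq t|F|$ with $c(uv)=deg_F(u)+deg_F(v)$, apply Proposition \ref{flow} to get a cost-constrained maximum matching, and conclude via Proposition \ref{matching}, Remark \ref{VC&full-empty} and K\"{o}nig's theorem. If anything, your two-inequality argument for $|M^{*}|=Ldyn_t(F)$ spells out the upper-bound direction more explicitly than the paper does.
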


\noindent \begin{proof}
For each edge $e=uv$ of $F$ define $cost(e)=deg_F(u)+deg_F(v)$ and for each $S\subseteq E(F)$ define $cost(S)={\sum}_{e\in S}cost(e)$. Let $M$ be any arbitrary matching and $\tau$ be a (zero,degree)-assignment constructed from $M$ as obtained in Proposition \ref{matching}. It is easily seen that $\overline{\tau}\leq t$ if and only if $cost(M)\leq t|F|$. Now, if $M$ is a maximum matching satisfying $cost(M)\leq t|F|$, then Proposition \ref{matching} implies $Ldyn_{t}(F)=dyn_{\tau}(F)=|M|$. By Proposition \ref{flow} there is a polynomial-time algorithm which finds maximum matching $M$ in $F$ with $cost(M)\leq c$ for any value $c$. Then using Proposition \ref{matching} for given forest $F$ and constant $t$, there is a polynomial time algorithm which finds a (zero,degree)-assignment $\tau$ such that $Ldyn_{t}(F)=dyn_{\tau}(F)$. From the other side, finding a minimum vertex cover in bipartite graphs is a polynomial-time problem. Therefore using Remark \ref{VC&full-empty} a minimum $\tau$-dynamic monopoly for $F$ can be found in polynomial-time.
\end{proof}

\noindent For further researches, it would be interesting to obtain other families of graphs for which $Ldynamo(k)$ has polynomial-time solution. Also we don't know yet whether $Ldynamo(k) \in NP \cup coNP$. We guess this is not true.

\section{Appendix}

\noindent We prove Proposition \ref{flow} using the minimum cost flow algorithm. The minimum cost flow problem (MCFP) is as follows (see e.g. \cite{AMO} for details).

\noindent
Let $G=(V, E)$ be a directed network with a cost $c(i,j)\geq 0$ for any of its edges $(i,j)$. Also for any edge $(i, j)\in E$ there exists a capacity $u(i,j)\geq 0$. We associate with each vertex $i\in V$ a number $b(i)$ which indicates its source or sink depending on whether $b(i)>0$ or $b(i)<0$. The minimum cost flow problem (MCFP) requires the determination of a flow mapping $f : E\rightarrow \Bbb{R}$ with minimum cost $z(f) ={\sum}_{(i,j)\in E}c(i,j)f(i,j)$ subject to the following two conditions:

{\bf (1)} $0\leq f(i,j)\leq u(i,j)$ for all $(i,j)\in E$ (capacity restriction);

{\bf (2)} $\sum_{\{j: (i,j)\in E\}}{f(i,j)}-\sum_{\{j: (j,i)\in E\}}{f(j,i)}=b(i)$ for all $i\in V$ (demand restriction).

\noindent In \cite{AMO}, a polynomial-time algorithm is given such that determines if such a mapping $f$ exists. And in case of existence, the algorithm outputs $f$. Furthermore, if all values $u(i,j)$ and $b(i)$ are integers then the algorithm obtains an integer-valued mapping $f$. In the following we prove Proposition \ref{flow}.

\noindent {\bf Theorem.}
Let $G[X,Y]$ be a bipartite graph with $cost(ij)\geq0$ for each edge $ij\in G$ and $d$ be a positive number. Then there exists a polynomial-time algorithm which finds maximum matching $M$ in $G$ with $cost(M)\leq d$.

\noindent \begin{proof}
Construct a directed network $H$ from bipartite graph $G[X,Y]$ as follows. Add two new vertices $s$ and $t$ as the source and the sink of $H$, respectively and directed edges $(s,x)$ for each $x\in X$ and $(y,t)$ for each $y\in Y$. Make all other edges directed from $X$ to $Y$. For each edge $(i,j)$ set $u(i,j)=1$ and define $c(i,j)$ as follows.
\begin{equation*}
c(i,j)=
\begin{cases}
0 & i=s ~ \text{or} ~ j=t, \\
cost(ij) & i\in X, j\in Y.
\end{cases}
\end{equation*}

\noindent For each vertex $i\in X\cup Y$, set $b(i)=0$ and define $b(s)=-b(t)=k$, where $k$ is an arbitrary positive integer. We have now an instance of MCFP. \noindent Assume that there exists a minimum cost flow mapping for this instance (obtained by the above-mentioned algorithm of \cite{AMO}). Since $u(i,j)$ and $b(i)$ are integers then $f$ is an integer-valued mapping. Therefore $f(i,j)$ is either $0$ or $1$. Let $M$ be the set of edges $(i,j)$ with $f(i,j)=1$, where $i\in X$ and $j\in Y$. Clearly $M$ is a matching of size $k$ having $cost(M)=z(f)$, where $z(f)$ is as defined in MCFP above.

\noindent Conversely, let $M'$ be any arbitrary matching in $G$ with $|M'|=k$. We construct a flow mapping $f$ as follows.
\begin{equation*}
f(i,j)=
\begin{cases}
1 & ~~~~i\in X, j\in Y , ij\in M',\\
1 &  ~~~~i=s,   jl\in M' ~\text{for some}~ l\in Y,\\
1 &  ~~~~j=t,   li\in M' ~\text{for some}~ l\in X,\\
0 & ~~~~\text{otherwise}.
\end{cases}
\end{equation*}

\noindent The conditions of MCFP are satisfied for $f$. Also $z(f)=cost(M')$. We conclude that to obtain a matching of size $k$ with the minimum cost  is equivalent to obtain a minimum cost flow mapping for the associated MCFP instance (note that $k$ is a parameter of this instance). We conclude that in order to find a matching $M$ satisfying $cost(M)\leq d$ and with the maximum size, it is enough to run the corresponding algorithm for the above-constructed MCFP instance for each $k$, where $k$ varies from $1$ to $|G|/2$. Note that $|G|/2$ is an upper bound for the size of any matching. This completes the proof.
\end{proof}

\end{document}